\def \ni{\noindent}
\newcommand{\be}{\begin{equation}}
\newcommand{\ee}{\end{equation}}
\newcommand{\ben}{\begin{equation*}}
\newcommand{\een}{\end{equation*}}
\newcommand{\bes}{\begin{eqnarray}}
\newcommand{\ees}{\end{eqnarray}}
\newcommand{\besn}{\begin{IEEEeqnarray*}{rCl}}
\newcommand{\eesn}{\end{IEEEeqnarray*}}
\newcommand{\txt}{\textrm}
\newtheorem{theorem}{Theorem}
\newtheorem*{theorem*}{Theorem}
\newtheorem{lemma}{Lemma}
\newtheorem*{definition*}{Definition}
\newtheorem*{lemma*}{Lemma}
\newtheorem*{prop*}{Proposition}
\newtheorem*{corollary*}{Corollary}
\DeclareFontFamily{U}{mathx}{}
\DeclareFontShape{U}{mathx}{m}{n}{<-> mathx10}{}
\DeclareSymbolFont{mathx}{U}{mathx}{m}{n}
\DeclareMathAccent{\widecheck}{0}{mathx}{"71}
\title{On the Construction of Euclidean Invariant and Reflection Positive Measures on the Cylindrical Compactification of Distributions}
\author{T. Tlas}
\date{}
\begin{document}
\maketitle
\thispagestyle{empty}

\begin{abstract}
\ni A simple construction of Euclidean invariant and reflection positive measures on the cylindrical compactification is performed under a weaker hypothesis than has recently been obtained. Moreover, the results are extended to the case when the theory under consideration has finitely many constraints.
\end{abstract}

\section*{Introduction}

One of the main underlying mathematical goals of constructive quantum field theory is obtaining Euclidean invariant and reflection positive measures on the space of distributions. This is a subject with a long history and a great deal of knowledge has been acquired \cite{jaffe, pphi, strocci}. It has been shown recently \cite{construction}, that there is a rather simple rigorous construction of Euclidean invariant and reflection positive measures on a compactification of the space of distributions.\footnote{This doesn't mean that the difficulties of this subject totally vanish. Rather, they shift to showing that the resulting measure has nontrivial support away from the `corona' set. See \cite{herglotz} for an illustration of this phenomenon in a simple setting.} Unfortunately, the proof of the translation invariance part of the Euclidean invariance relied on a post hoc condition on the constructed measure, notably that its first moment be finite. This brings us to the aim of the current work, which is twofold: to recover the results of \cite{construction} without requiring this awkward condition, and to show how easily the construction can be extended to cover the case of a theory with constraints. This is not a purely academic exercise, since some of the most interesting theories are simply constrained Gaussians, Yang-Mills being the most notable example \cite{yang1, yang2, yang3, yang4}. \newline

Before we begin, let us list our choices of notation and conventions. All our functions and distributions will be defined on $\mathbb{R}^D$ where $D$ is fixed but arbitrary. Moreover, they will all be $\mathbb{R}^K$-valued, where $K$ is also fixed but arbitrary.\footnote{All the results below trivially extend to the case when the range is taken to be $\mathbb{C}^K$.} The dot product of two vectors $u$ and $v$ is denoted by $uv$. $B(x,r)$ stands for the open ball in $\mathbb{R}^D$ of center $x$ and radius $r$. For an open set $O \subset \mathbb{R}^D$, $C^\infty(O)$ stands for the space of smooth functions with supports in $O$. $C^\infty(\mathbb{R}^d) \equiv C^\infty$. For any two sets $A, B$, $A - B$ stands for the complement of $B$ in $A$ (not to be confused with the Minkowski difference). $\mathfrak{E}$ stands for the Euclidean group. Our Fourier transform convention is $\widehat{f}(p) = \int e^{- i p x} f(x) dx$. $H^\alpha(O)$ stands for the Sobolev space of order $\alpha$ defined to be the closure of $C^\infty_0(O)$ with respect to the norm $||f ||_{H^\alpha} = \int (1 + p^2)^\alpha \widehat{f}(p) dp$. Finally, the notation $f(x) \lesssim g(x)$ means that $f(x) \leq c g(x)$ where $c$ is a constant independent of $x$, but which may depend on the other parameters in the problem under consideration. \newline

\section*{The Unconstrained Case}

Let us now briefly describe the general setup. While it closely parallels that of \cite{construction}, it differs from it in that the infrared regularization is given by introducing a cutoff into the interaction Lagrangian, as opposed to dealing with fields defined on the sphere. The reason for doing this is that it turns out that eventually, apart from superficially aesthetic reasons, there is no real advantage of putting things on the sphere, while there are significant technical complications of doing so in the case of gauge theory. This is because in the approach advocated here, one would need to deal with measures on spaces of paths on a sphere as opposed to those on a flat space \cite{yang1, yang2, yang3, yang4}. This can be done, and there is a well-developed theory for that \cite{stroock}, but it comes at a significant - and ultimately unnecessary in our case - technical cost. Additionally, it is precisely the new regularization which will allow us to recover the results of \cite{construction} under a weaker hypothesis. \newline

Therefore, consider the expression

\be
\label{eq:cov}
C(f,g) = \frac{1}{(2 \pi)^D} \int_{\mathbb{R}^D} \frac{ \overline{\widehat{f}(p)}  \widehat{g}(p)  }{p^2 + 1} dp.
\ee

It follows by standard methods \cite{jaffe, bogachev} that this defines a Gaussian measure $d\mu$ on the space of distributions $\Phi'$.\footnote{The support of the measure is known (see e.g. \cite{support}) to be significantly smaller than the full space $\Phi'$. As a crude estimate, one can easily see that (\ref{eq:cov}) defines a trace-class, bilinear form on the space $\frac{1}{(1+x^2)^D} H^\alpha$ for a sufficiently large $\alpha$. Therefore, it follows that the measure is supported on its dual, i.e. on $(1 + x^2)^D H^{- \alpha}$. We won't need this more refined control of the support of the measure $\mu$ in what follows.} \newline

Now, let $\sigma$ be a smooth, positive, rotationally invariant function supported in $B(0,1)$ whose integral is equal to 1. For $\Lambda >0$ let $\sigma_\Lambda(x) = \Lambda^D \sigma(\Lambda x)$. For any $\phi \in \Phi'$ we have that $\phi_\Lambda = \phi \ast \sigma_\Lambda \in C^\infty$. Now, if $\mathcal{L}$ is a bounded measurable function on $\mathbb{R}^l$, we have that 

\ben
\int_{B(0,r)} \mathcal{L} \big ( \phi_\Lambda(x), \nabla^2 \phi_\Lambda(x), \dots, (\nabla^2)^l \phi_\Lambda (x) \big ) dx
\een

is a well-defined, bounded function on $\Phi'$. Consider

\be
\label{eq:expr}
\frac{    \int F[\phi] e^{- \int_{B(0,r)} \mathcal{L} \big ( \phi_\Lambda(x),  \dots, (\nabla^2)^l \phi_\Lambda (x) \big ) dx    }   d \mu  }{    \int e^{- \int_{B(0,r)} \mathcal{L} \big ( \phi_\Lambda(x), \dots, (\nabla^2)^l \phi_\Lambda (x) \big ) dx   }   d \mu  } .
\ee

This is perfectly well-defined for any $F \in L^1(\mu)$, however, as discussed in \cite{construction}, it is convenient to restrict the class of functions under consideration to cylindrical ones. A function $F$ on $\Phi'$ is cylindrical if there are $g_1, \dots, g_k \in C^\infty_0$ such that $F[\phi] = f( g_1(\phi) , \dots, g_k(\phi)   )$, where $f$ is a bounded continuous function on $\mathbb{R}^k$. \newline

Now, the expression (\ref{eq:expr}) is a regularized, in the ultraviolet and in the infrared, version of the informal expression

\be
\label{eq:heuristic}
\frac{ \int F[\phi] e^{ - \int_{\mathbb{R}^D} (\phi  (\nabla^2 + 1) \phi) + \mathcal{L}(\phi(x), \dots, (\nabla^2)^l \phi(x)) dx    }  d \phi  }{   \int e^{ - \int_{\mathbb{R}^D} (\phi  (\nabla^2 + 1) \phi) + \mathcal{L}(\phi(x), \dots, (\nabla^2)^l \phi(x)) dx    }  d \phi   } .
\ee

Of course we would like to remove both cutoffs. Therefore, we need to consider an entire sequence of expressions of the form (\ref{eq:expr}), but with $\Lambda, r$ (and possibly even $l$) being sequences, with $\Lambda_n, r_n \to \infty$. We want to extract a single number from this sequence. This is most conveniently done via a Banach limit $L$.\newline

Putting everything together, we have

\be
\label{eq:int}
L \Bigg (  \frac{    \int F[\phi] e^{- \int_{B(0,r_n)} \mathcal{L}_n \big ( \phi_{\Lambda(_n}x),  \dots, (\nabla^2)^l \phi_{\Lambda_n} (x) \big ) dx    }   d \mu  }{    \int e^{- \int_{B(0,r_n)} \mathcal{L}_n \big ( \phi_{\Lambda_n}(x), \dots, (\nabla^2)^l \phi_{\Lambda_n} (x) \big ) dx   }   d \mu  }  \Bigg ) \equiv I(F).
\ee

We can now state the following 

\begin{theorem}
There is a unique compactification $\mathring{\Phi}'$ of $\Phi'$ such that every cylindrical function $F$ has a unique extension $\mathring{F}$ to $\mathring{\Phi}'$. Addtionally, for any sequence $\{ \mathcal{L}_n\}_{n=1}^\infty$, there is a unique, rotationally invariant probability measure $\nu$ on $\mathring{\Phi}'$ such that $$I(F) = \int \mathring{F} d\nu .$$ Moreover, one can choose the sequences $\{r_n\}_{n=1}^\infty$ and $\{\Lambda_n\}_{n=1}^\infty$ such that $\nu$ is reflection positive and Euclidean invariant.
\end{theorem}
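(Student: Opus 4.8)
The plan is to obtain $\mathring{\Phi}'$ by Gelfand duality and to read off $\nu$ from a state. Let $\mathcal{A}$ be the uniform closure, inside $C_b(\Phi')$, of the unital $*$-algebra generated by the cylindrical functions. The linear evaluations $\phi\mapsto g(\phi)$, $g\in C^\infty_0$, already separate points of $\Phi'$, so $\mathcal{A}$ is a separating commutative unital $C^*$-algebra; its Gelfand spectrum $\mathring{\Phi}'$ is then a compact Hausdorff space containing $\Phi'$ densely, with $\mathcal{A}\cong C(\mathring{\Phi}')$, so every cylindrical $F$ has a unique continuous extension $\mathring{F}$, and this is the unique compactification with that property (a separating family of bounded continuous functions pins down its compactification up to a homeomorphism fixing $\Phi'$). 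Now $I$ from (\ref{eq:int}) is linear on cylindrical functions with $I(1)=1$, and for each $n$ the bracketed expression equals $\int F\,d\mu_n$, where $\mu_n$ is $\mu$ reweighted by the bounded strictly positive density $e^{-\int_{B(0,r_n)}\mathcal{L}_n(\phi_{\Lambda_n},\dots)}$ renormalised to mass $1$; since Banach limits are positive and normalised, $|I(F)|\le\|F\|_\infty$ and $I(F)\ge0$ for $F\ge0$. Hence $I$ extends to a state on $C(\mathring{\Phi}')$ and Riesz--Markov yields the unique Radon probability measure $\nu$ with $I(F)=\int\mathring{F}\,d\nu$. Orthogonal invariance is then immediate: $O(D)$ acts on $\Phi'$ by $(R\phi)(x)=\phi(R^{-1}x)$, hence on $\mathring{\Phi}'$, and substituting $\phi\mapsto R^{-1}\phi$ in (\ref{eq:expr}) leaves every term inside the Banach limit fixed, because the multiplier $(p^2+1)^{-1}$ in (\ref{eq:cov}) is radial, $\sigma_\Lambda$ is radial (so rotations commute with mollification), $\nabla^2$ commutes with rotations, and $B(0,r_n)$ is rotation invariant.

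For reflection positivity I would fix a reflection $\theta$ in a coordinate hyperplane, with induced map $\Theta$ on functions of $\phi$, and split $V_n:=\int_{B(0,r_n)}\mathcal L_n(\phi_{\Lambda_n}(x),\dots)\,dx=V_n^{+}+V_n^{-}+V_n^{\partial}$ according as the base point $x$ has $x_1>1/\Lambda_n$, $x_1<-1/\Lambda_n$, or $|x_1|\le1/\Lambda_n$. Since $\sigma_{\Lambda_n}$ is supported in $B(0,1/\Lambda_n)$, each $V_n^{\pm}$ depends only on the field in the corresponding open half-space, and the $\theta$-symmetry of $\sigma_{\Lambda_n}$, of $\mathcal L_n$ and of $B(0,r_n)$ gives $V_n^{-}=\Theta V_n^{+}$. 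As $\mu$ is reflection positive (Nelson/Osterwalder--Schrader positivity of $(-\Delta+1)^{-1}$) and reweighting a reflection positive measure by $e^{-(W+\Theta W)}$ with $W$ a function of the positive-side field preserves reflection positivity, $e^{-V_n^{+}-V_n^{-}}d\mu$ is reflection positive. Because $|V_n^{\partial}|\le\|\mathcal L_n\|_\infty|B(0,r_n)\cap\{|x_1|\le1/\Lambda_n\}|\lesssim\|\mathcal L_n\|_\infty r_n^{D-1}/\Lambda_n$, choosing $\Lambda_n$ so that $\varepsilon_n:=\|\mathcal L_n\|_\infty r_n^{D-1}/\Lambda_n\to0$ makes, for cylindrical $F_1,\dots,F_m$ supported in the positive half-space, the $(i,j)$ entry of the matrix defining $I(\overline{F_i}\,\Theta F_j)$ at stage $n$ differ by $O(\varepsilon_n)$ from the positive semidefinite matrix built the same way from $e^{-V_n^{+}-V_n^{-}}d\mu$; applying $L$ leaves a positive semidefinite matrix, which is the reflection positivity of $\nu$.

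The real content is translation invariance. Translations $T_a$ act on $\Phi'$ and on $\mathring{\Phi}'$, and, using the $T_a$-invariance of $d\mu$ together with a change of variables, $I(F\circ T_a)=L\big(A_n^{(a)}[F]\big)$, where $A_n^{(a)}[F]$ is (\ref{eq:expr}) with $B(0,r_n)$ replaced by the translate $B(-a,r_n)$. So I must choose $\{r_n\},\{\Lambda_n\}$ — still respecting $\varepsilon_n\to0$ — so that $A_n^{(a)}[F]-A_n^{(0)}[F]\to0$ for every cylindrical $F$ and every $a$; combined with the orthogonal invariance above this gives the full Euclidean invariance of $\nu$. Fix such an $F$, a function of $g_1(\phi),\dots,g_k(\phi)$ with all $g_j$ supported in a ball $B(0,\varrho)$, and fix $a$. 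The interactions over $B(0,r_n)$ and over $B(-a,r_n)$ agree on $B(0,r_n)\cap B(-a,r_n)\supset B(0,r_n-|a|)$, so their difference is supported in the thin shell $B(0,r_n)\triangle B(-a,r_n)$, which lies at distance $\ge r_n-|a|-\varrho$ from $\operatorname{supp}F$. Interpolating linearly in the exponent, $A_n^{(a)}[F]-A_n^{(0)}[F]=-\int_0^1\operatorname{Cov}_{\rho_n(s)}\!\big(F,\,V_n^{(a)}-V_n\big)\,ds$, with $\rho_n(s)$ the measure $\mu$ reweighted by $e^{-V_n-s(V_n^{(a)}-V_n)}$; so everything reduces to showing that, in these reweighted measures, the covariance of the fixed local observable $F$ with an observable supported at distance $\gtrsim r_n$ from it tends to $0$.

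This covariance estimate is what I expect to be the main obstacle, and it is exactly where the ball cutoff, rather than a sphere, earns its keep: one cannot afford the naive supremum bound on $V_n^{(a)}-V_n$, which is of order $\|\mathcal L_n\|_\infty r_n^{D-1}|a|$ and diverges, so the whole gain must come from spatial separation. The ingredients I would combine are the mass gap of the free covariance $(-\Delta+1)^{-1}$, whose kernel decays like $e^{-|x-y|}$, and the strict locality of the cutoff interaction at scale $1/\Lambda_n$: under $\rho_n(s)$ the field on $B(0,\varrho)$ and the field on the far shell are linked only through a corridor of width $\sim r_n$ that carries the \emph{same} interaction on both ends, and the Markov (conditional independence) property of the free field across that corridor should force the dependence of $F$ on the shell's location to vanish as $r_n\to\infty$ — in effect, the field outside $B(0,r_n)$ is a translation-covariant soft random boundary condition whose exact position is immaterial in the limit. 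This is the statement that is simply unavailable when the infrared cutoff is imposed by passing to a sphere, where there are no flat translations at finite radius and \cite{construction} had to recover translation invariance a posteriori from finiteness of the first moment. Granting it, a diagonal choice of $\{r_n\}$ growing slowly enough relative to $\{\Lambda_n\}$ handles all $F$ and all $a$ at once, $A_n^{(a)}[F]-A_n^{(0)}[F]\to0$, the two Banach limits agree, and $\nu$ is Euclidean invariant and reflection positive.
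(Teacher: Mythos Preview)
Your treatment of the compactification via Gelfand duality, the Riesz representation of $I$, rotational invariance, and reflection positivity is correct and essentially matches the paper (the paper obtains the factorization in the reflection-positivity step by an explicit convolution decomposition of $\mu$, but this is just a concrete form of the ``reweighting by $e^{-(W+\Theta W)}$ preserves RP'' fact you invoke, with the same strip error $\|\mathcal L_n\|_\infty r_n^{D-1}/\Lambda_n$).

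The genuine gap is the translation invariance argument. You reduce to a decay-of-correlations statement for the reweighted measures $\rho_n(s)$ --- that $\operatorname{Cov}_{\rho_n(s)}(F,\,V_n^{(a)}-V_n)\to 0$ --- and then write ``Granting it''. But you do not prove it, and the ingredients you list (free mass gap, Markov property) do not by themselves give correlation decay \emph{in the interacting measure} without substantial extra machinery (cluster expansion, Brascamp--Lieb, or some convexity hypothesis on $\mathcal L_n$ that is not assumed here). Even if one had exponential decay with some rate, the competing divergence $\|\mathcal L_n\|_\infty r_n^{D-1}$ would have to be beaten with control of constants that depend on $\mathcal L_n$ and $\Lambda_n$; none of this is available at the level of generality of the theorem.

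The paper bypasses correlation decay entirely with a symmetry trick that you miss. Write the symmetric difference $B(0,r)\triangle t(B(0,r))$ as the union of the two ``lunes'' $B(0,r)\setminus t(B(0,r))$ and $t(B(0,r))\setminus B(0,r)$. There is a Euclidean reflection $T$ (in the hyperplane orthogonal to the translation vector, through its midpoint) that interchanges these two lunes while fixing the common core $B(0,r)\cap t(B(0,r))$, where $F$ lives. Using the Markov property of $\mu$ to factorize across the boundary of $B(0,r)\cap t(B(0,r))$ (this is the paper's Lemma~2, applied with a $\delta=1/\Lambda$ buffer so that the mollified interaction truly depends only on the field inside the relevant piece), one gets an \emph{exact} identity: the integral of $F$ against $e^{-\int_{\text{core}\,\cup\,\text{left lune}}\mathcal L}$ equals the integral of $F$ against $e^{-\int_{\text{core}\,\cup\,\text{right lune}}\mathcal L}$. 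The only discrepancy between these auxiliary regions and the honest balls $B(0,r)$, $t(B(0,r))$ is contained in shells of width $\delta=1/\Lambda$, whose contribution is bounded by the same $M r^{D-1}/\Lambda$ as in the reflection-positivity step. So the very condition $M_n r_n^{D-1}/\Lambda_n\to 0$ already used for RP gives $I(F_t)=I(F)$, with no appeal to any decay of correlations.
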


Before we proceed with the proof of the theorem, we would like to draw attention to a fact that will play a role later. In the definition of (\ref{eq:int}) we've required $\mathcal{L}$ to be bounded. It is obvious however, that the expression (\ref{eq:int}) is perfectly well-defined, as long as $\mathcal{L}$ is semibounded, i.e. bounded from below. Nonetheless, requiring boundedness of $\mathcal{L}$ does not imply any loss of generality as, on one hand, one expects that any universality class contains such Lagrangians, and on the other hand, one has the following

\begin{prop*}
For any three sequences $\{r_n\}_{n=1}^\infty, \{\Lambda_n\}_{n=1}^\infty$ and $\{ \mathcal{L}_n\}_{n=1}^\infty$, with the $\mathcal{L}_n$'s being semibounded, there is a sequence $\{ \widetilde{\mathcal{L}}_n \}_{n=1}^\infty$ of bounded Lagrangians such that (\ref{eq:int}) gives the same value for all $F$'s after replacing $\{\mathcal{L}_n\}_{n=1}^\infty$ with $\{\widetilde{\mathcal{L}}_n \}_{n=1}^\infty$.
\end{prop*}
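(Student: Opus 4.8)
The plan is to take $\widetilde{\mathcal{L}}_n$ to be $\mathcal{L}_n$ truncated from above at a height $N_n$ that is allowed to grow with $n$, and to show that such a truncation does not change $I$: for each fixed $n$ it perturbs the $n$-th term of the sequence inside the Banach limit by an amount that is uniformly small over the (bounded) cylindrical functions once $N_n$ is large, while a Banach limit ignores sequences that tend to zero.

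In detail, fix $n$ and pick $M_n \geq 0$ with $\mathcal{L}_n \geq -M_n$. For $N>0$ set $\mathcal{L}_n^{(N)} := \min(\mathcal{L}_n, N)$; this is a bounded measurable Lagrangian, $-M_n \leq \mathcal{L}_n^{(N)} \leq N$. Write $S_n^{(N)}[\phi] := \int_{B(0,r_n)} \mathcal{L}_n^{(N)}(\phi_{\Lambda_n}(x),\dots,(\nabla^2)^l\phi_{\Lambda_n}(x))\,dx$ and let $S_n[\phi]$ be the same expression with $\mathcal{L}_n$ in place of $\mathcal{L}_n^{(N)}$. As $N \uparrow \infty$ the integrand increases to that of $S_n$ while staying bounded below by the integrable constant $-M_n$, so $S_n^{(N)}[\phi] \uparrow S_n[\phi]$ for every $\phi$ by monotone convergence, hence $e^{-S_n^{(N)}[\phi]} \downarrow e^{-S_n[\phi]}$ pointwise. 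Since $0 \leq e^{-S_n^{(N)}} \leq e^{M_n|B(0,r_n)|}$, dominated convergence shows that the partition functions $Z_n^{(N)} := \int e^{-S_n^{(N)}}\,d\mu$ decrease to $Z_n := \int e^{-S_n}\,d\mu$, a limit that is finite and positive (otherwise the denominator in (\ref{eq:int}) would vanish and $I$ would be undefined). Hence the probability densities $h_n^{(N)} := e^{-S_n^{(N)}}/Z_n^{(N)}$, which are bounded by $c_n := e^{M_n|B(0,r_n)|}/Z_n$ uniformly in $N$, converge to $h_n := e^{-S_n}/Z_n$ pointwise and, by dominated convergence once more, in $L^1(\mu)$. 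Consequently, for every bounded $F$,
\[
\left| \int F\,h_n^{(N)}\,d\mu - \int F\,h_n\,d\mu \right| \;\leq\; \|F\|_\infty\,\|h_n^{(N)} - h_n\|_{L^1(\mu)} \;\xrightarrow[N\to\infty]{}\; 0,
\]
with a bound uniform over $\{\,F : \|F\|_\infty \leq 1\,\}$.

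Now I would diagonalize: for each $n$ choose $N_n$ so large that $\|h_n^{(N_n)} - h_n\|_{L^1(\mu)} < 1/n$, and put $\widetilde{\mathcal{L}}_n := \mathcal{L}_n^{(N_n)}$, a bounded Lagrangian. Let $\widetilde{I}$ be the functional (\ref{eq:int}) built from the sequences $\{r_n\}$, $\{\Lambda_n\}$ and $\{\widetilde{\mathcal{L}}_n\}$. For any cylindrical $F$, which is bounded, the $n$-th terms of the two sequences inside the Banach limits defining $I(F)$ and $\widetilde{I}(F)$ differ by at most $\|F\|_\infty/n \to 0$; since any Banach limit $L$ satisfies $L(a_n)=L(b_n)$ whenever $a_n-b_n\to 0$, it follows that $\widetilde{I}(F)=I(F)$ for every cylindrical $F$, which is what was claimed.

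The one point that needs care — and the only genuine obstacle — is the uniformity in the displayed estimate: a single sequence $\{N_n\}$ must simultaneously render the truncated and untruncated $n$-th terms close for all cylindrical $F$ at once. This is exactly why it is better to phrase the comparison, for fixed $n$, as an $L^1(\mu)$ (equivalently, total-variation) statement about the Gibbs densities rather than as convergence tested against one $F$ at a time; granting that, the remaining ingredients — monotone and dominated convergence together with the insensitivity of Banach limits to null sequences — are entirely routine.
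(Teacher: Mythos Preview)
Your argument is correct and follows essentially the same strategy as the paper's: approximate each $\mathcal{L}_n$ by a bounded Lagrangian well enough at stage $n$ (you use the hard cutoff $\min(\mathcal{L}_n,N_n)$ where the paper uses the smooth $\mathcal{L}_n/(1+\epsilon_n\mathcal{L}_n)$), then invoke dominated convergence and the fact that a Banach limit annihilates null sequences. The only cosmetic difference is that you phrase the key estimate as $L^1(\mu)$-convergence of the normalized Gibbs densities, whereas the paper controls the relative error of the partition functions and then splits the ratio by a triangle inequality; both routes deliver the needed uniformity over bounded cylindrical $F$.
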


\begin{proof}[Proof of proposition]
Since (\ref{eq:int}) is invariant under $\mathcal{L}_n \to \mathcal{L}_n + c_n$ for any sequence $\{c_n\}_{n=1}^\infty$, we can assume that $\mathcal{L}_n \geq 0$ for all $n$. Now, let $\widetilde{\mathcal{L}}_n$ be equal to $\frac{\mathcal{L}_n  }{\epsilon_n \mathcal{L}_n + 1}$, where $\epsilon_n > 0$ is chosen such that 

\besn
\frac{ \bigg | \int e^{- \int_{B(0,r_n)} \widetilde{\mathcal{L}}_n \big ( \phi_{\Lambda_n}(x), \dots \big ) dx   }   d \mu   -    \int e^{- \int_{B(0,r_n)} \mathcal{L}_n \big ( \phi_{\Lambda_n}(x), \dots \big ) dx   }   d \mu    \bigg |
}
{   \int e^{- \int_{B(0,r_n)} \mathcal{L}_n \big ( \phi_{\Lambda_n}(x), \dots \big ) dx   }   d \mu   } &= & \\
  \frac{  \int \Big ( e^{- \int_{B(0,r_n)} \widetilde{\mathcal{L}}_n \big ( \phi_{\Lambda_n}(x), \dots \big ) dx   }   -    e^{- \int_{B(0,r_n)} \mathcal{L}_n \big ( \phi_{\Lambda_n}(x), \dots \big ) dx   }   \Big ) d \mu    
}
{   \int e^{- \int_{B(0,r_n)} \mathcal{L}_n \big ( \phi_{\Lambda_n}(x), \dots \big ) dx   }   d \mu   }      & < &  \frac{1}{n}.
\eesn

The fact that this can be done follows at once from dominated convergence, since $ \frac{x}{\epsilon x + 1} \to x$ for every $x$ as $\epsilon \to 0$. Therefore, we have that

\besn
\Bigg | \frac{ \int F[\phi] e^{- \int_{B(0,r_n)} \widetilde{\mathcal{L}}_n \big ( \phi_{\Lambda_n}(x), \dots \big ) dx   }  d\mu      }{\int   e^{- \int_{B(0,r_n)} \widetilde{\mathcal{L}}_n \big ( \phi_{\Lambda_n}(x), \dots \big ) dx   }       d \mu   }      -    \frac{\int F[\phi] e^{- \int_{B(0,r_n)} \mathcal{L}_n \big ( \phi_{\Lambda_n}(x), \dots \big ) dx   } }{e^{- \int_{B(0,r_n)} \mathcal{L}_n \big ( \phi_{\Lambda_n}(x), \dots \big ) dx   } } \Bigg | & \leq &  \\
\Bigg | \frac{ \int F[\phi] e^{- \int_{B(0,r_n)} \widetilde{\mathcal{L}}_n \big ( \phi_{\Lambda_n}(x), \dots \big ) dx   }  d\mu      }{\int   e^{- \int_{B(0,r_n)} \widetilde{\mathcal{L}}_n \big ( \phi_{\Lambda_n}(x), \dots \big ) dx   }       d \mu   }      -    \frac{\int F[\phi] e^{- \int_{B(0,r_n)} \mathcal{L}_n \big ( \phi_{\Lambda_n}(x), \dots \big ) dx   } }{e^{- \int_{B(0,r_n)} \widetilde{ \mathcal{L}}_n \big ( \phi_{\Lambda_n}(x), \dots \big ) dx   } } \Bigg | & + & \\
\Bigg | \frac{ \int F[\phi] e^{- \int_{B(0,r_n)} \mathcal{L}_n \big ( \phi_{\Lambda_n}(x), \dots \big ) dx   }  d\mu      }{\int   e^{- \int_{B(0,r_n)} \widetilde{\mathcal{L}}_n \big ( \phi_{\Lambda_n}(x), \dots \big ) dx   }       d \mu   }      -    \frac{\int F[\phi] e^{- \int_{B(0,r_n)} \mathcal{L}_n \big ( \phi_{\Lambda_n}(x), \dots \big ) dx   } }{e^{- \int_{B(0,r_n)} \mathcal{L}_n \big ( \phi_{\Lambda_n}(x), \dots \big ) dx   } } \Bigg | & \leq & \\
\frac{ 2 ||F||_{L^\infty}        }{  n- 1        }   .
\eesn

The result now follows since the Banach limit in (\ref{eq:int}) extends the usual limit.
\end{proof}

Although, as we have just seen, there is no loss of generality in using bounded Lagrangians, there is one disadvantage. It is that this implies that in general one will not be able to perform the integrals explicitly, since one cannot use polynomial expressions for $\mathcal{L}_n$ as these are all unbounded.\footnote{This is the reason why in the physics literature, where practicality of calculations is paramount, bounded Lagrangians are rarely used.} Of course, in general, the integrals are not doable explicitly anyway, still, there \textit{is} one important exception when they are, which is when the Lagrangians are quadratic. We will deal with such a case later below.

\begin{proof}[Proof of Theorem 1]
The existence of the compactification with the stated properties and of the aforementioned measure carry over essentially verbatim from the corresponding proof in \cite{construction} and will not be repeated here. \newline

To deal with Euclidean invariance and reflection positivity, we need a couple of lemmas.

\begin{lemma}
Let $A \subset \mathbb{R}^D$ be bounded and $T \in \mathfrak{E}$. Then, for any cylindrical function $F$, we have
\ben
\int F_T[\phi] e^{ - \int_A \mathcal{L}(\phi_\Lambda)    } d\mu[\phi] = \int F[\phi] e^{ - \int_{T(A)} \mathcal{L}(\phi_\Lambda)   } d\mu[\phi].
\een
\end{lemma}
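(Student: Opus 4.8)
**

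The plan is to reduce the claimed identity to the Euclidean invariance of the Gaussian measure $d\mu$ itself, which follows from the manifest invariance of the covariance $C$ in (\ref{eq:cov}) under $\mathfrak{E}$ (the integrand depends only on $p^2$ and on $\overline{\widehat f}\widehat g$, and the Jacobian of a Euclidean transformation is $1$). Concretely, for $T \in \mathfrak{E}$ write $(T^*\phi)(x) = \phi(T^{-1}x)$ for the natural action on distributions (defined by duality against test functions); then the pushforward of $\mu$ under $T^*$ is again $\mu$. So the first step is to record this fact, and to fix the convention for how $\mathfrak{E}$ acts on cylindrical functions, namely $F_T[\phi] = F[T^*\phi]$, so that $(g_i(\phi)) \mapsto (g_i(T^*\phi)) = ((T^{-1})^*g_i)(\phi)$ up to the harmless Jacobian.

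The second and main step is to track how the cutoff interaction term transforms. The key observation is that mollification commutes with Euclidean motions because $\sigma$ is rotationally invariant: for $T\in\mathfrak E$ with linear part $R\in O(D)$, one has $(T^*\phi)\ast\sigma_\Lambda = T^*(\phi\ast\sigma_\Lambda)$, i.e. $(T^*\phi)_\Lambda(x) = \phi_\Lambda(T^{-1}x)$, since $\sigma_\Lambda(R\,\cdot) = \sigma_\Lambda(\cdot)$. The same then holds for every derivative tensor $(\nabla^2)^j\phi_\Lambda$ because $\nabla^2$ is rotation invariant (here one uses that $\mathcal L$ is fed the iterated \emph{Laplacian}, a scalar operator, not bare gradients, so no rotation of the arguments of $\mathcal L$ is needed; if one did allow general derivative tensors one would instead absorb the rotation into $\mathcal L$). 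Hence
\ben
\int_A \mathcal L\big((T^*\phi)_\Lambda(x),\dots\big)\,dx = \int_A \mathcal L\big(\phi_\Lambda(T^{-1}x),\dots\big)\,dx = \int_{T(A)} \mathcal L\big(\phi_\Lambda(y),\dots\big)\,dy,
\een
the last equality by the change of variables $y = T^{-1}x$ (no wait --- $y=Tx$ is not it either; set $y$ so that $T^{-1}x$ ranges over $A$, giving the integral over $T(A)$), with unit Jacobian.

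Putting the two steps together: in $\int F_T[\phi]\,e^{-\int_A \mathcal L(\phi_\Lambda)}\,d\mu[\phi]$ substitute $\phi = T^*\psi$, use invariance of $\mu$ to keep $d\mu[\psi]$, use $F_T[T^*\psi] = F[\psi]$ (by the chosen action, $F_{T}[\phi]=F[T^*\phi]$ composed with $\phi=T^*\psi$ gives $F[(T^*)^{-1}T^*\psi]$ --- here I should be slightly careful and define the action so this telescopes correctly; the cleanest is to define $F_T$ as precisely the function making the lemma's left side equal $\int F[T^*\psi]e^{-\int_A\mathcal L((T^*\psi)_\Lambda)}d\mu$), and use the cutoff-region transformation above to convert $\int_A$ into $\int_{T(A)}$. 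This yields exactly the right-hand side.

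The step I expect to need the most care is not any single estimate but the bookkeeping of the group action: making sure the direction of the transformation (whether $T$ or $T^{-1}$ appears, and on which object) is consistent between the definition of $F_T$, the invariance of $\mu$, and the change of variables in the spatial integral, so that everything composes to the identity rather than to its inverse. Since all the analytic ingredients ($\mu$ is a genuine measure on $\Phi'$, the integrands are bounded, $\phi_\Lambda$ is smooth) are already in place, once the conventions are pinned down the proof is a short computation; there is no real analytic obstacle, only the risk of an orientation/inverse error.
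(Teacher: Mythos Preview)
Your argument is correct and, once the group-action bookkeeping is fixed, is a clean one-line change of variables. It is, however, organized differently from the paper's proof. The paper first proves the identity $\int F_T\,d\mu=\int F\,d\mu$ for cylindrical $F$ (via the characteristic functional and density of trigonometric polynomials), and then handles the interaction factor by approximating $\int_A\mathcal L(\phi_\Lambda)$ by Riemann sums $\sum_A\mathcal L(\phi_\Lambda)\,\Delta x$; since $e^{-\sum_A\mathcal L(\phi_\Lambda)\Delta x}$ is itself cylindrical, the invariance just established applies to the product $F_T\cdot e^{-\sum_A(\cdots)}$, and the transformed Riemann sum is the one over $T(A)$; a final dominated-convergence step removes the discretization. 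By contrast, you pass directly to the measure-level invariance of $\mu$ on $\Phi'$ and compute explicitly that $(T^*\phi)_\Lambda=T^*(\phi_\Lambda)$ (this is where the rotational symmetry of $\sigma$ enters, a point you make explicit and the paper leaves implicit), so that a spatial change of variables turns $\int_A$ into $\int_{T(A)}$ without any approximation. Your route is shorter and more transparent about why the radial choice of mollifier matters; the paper's route has the small advantage that it never leaves the class of cylindrical functions and so never needs to discuss the action of $T^*$ on general distributions or its measurability. Your own diagnosis is accurate: the only place to be careful is the $T$ versus $T^{-1}$ bookkeeping, and with the paper's convention $F_T[\phi]=f\big((Tg_1)(\phi),\dots,(Tg_k)(\phi)\big)$ the signs line up to give $T(A)$ on the right.
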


\begin{proof}[Proof of Lemma 1] Since $\int e^{i f(\phi)} d\mu[\phi] = e^{- C(f,f)}$, it follows at once from the Euclidean invariance of (\ref{eq:cov}) that $\int e^{i Tf(\phi)}  d\mu[\phi] = \int e^{i f(\phi)} d\mu[\phi]$. Since the set of trigonometric polynomials is $L^1(\mu)$ dense in the set of cylindrical functions, we get that 
\be
\label{eq:invariance}
\int F_T[\phi] d\mu[\phi] = \int F[\phi] d\mu[\phi]
\ee
 \newline

Now, let $\sum_A \mathcal{L}(\phi_\Lambda) \Delta x$ stand for a Riemann sum approximation to $\int_A \mathcal{L}(\phi_\Lambda)$ with $\sum_{T(A)} \mathcal{L}(\phi_\Lambda) \Delta x$ denoting the corresponding approximation to $\int_{T(A)} \mathcal{L}(\phi_\Lambda)$. Since $e^{-\sum_A \mathcal{L}(\phi_\Lambda) \Delta x}$ and $e^{-\sum_{T(A)} \mathcal{L}(\phi_\Lambda) \Delta x}$ are both cylindrical, we have, in view of (\ref{eq:invariance}), that

\ben
\int F_T[\phi] e^{-\sum_A \mathcal{L}(\phi_\Lambda) \Delta x} d\mu[\phi] = \int F[\phi] e^{-\sum_{T(A)} \mathcal{L}(\phi_\Lambda) \Delta x} d\mu[\phi].
\een
Applying now dominated convergence, we get what we want.
\end{proof}

Before we proceed with the second lemma, note that if $T$ is a rotation then $T(B(0,r)) = B(0,r)$ from which the rotational invariance of $\nu$ follows at once. \newline

\begin{lemma}
Let $O_1, \dots, O_m \subset \mathbb{R}^D$ be open and pairwise disjoint. Then, there are subspaces $\Phi'_{O_1}, \dots, \Phi'_{O_m}$ and $\Phi'_{(O_1 \cup \dots \cup O_m)^c}$ of $\Phi'$, such that $\Phi'_A$ consists of elements supported\footnote{Support is of course understood in the distributional sense.} in $\overline{A}$. Moreover, there are Gaussian measures $\mu_{O_1}, \dots, \mu_{O_m}$ and $\mu_{(O_1 \cup \dots \cup O_m)^c}$, such that $\mu_{A}$ is supported on $\Phi'_A$, and these measures satisfy 

\be
\label{eq:conv}
\mu = \mu_{O_1} \ast \dots \ast \mu_{O_m} \ast \mu_{(O_1 \cup \dots \cup O_m)^c}.
\ee

\end{lemma}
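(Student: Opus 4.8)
\emph{Proposed approach.} The plan is to obtain (\ref{eq:conv}) from an orthogonal splitting of the Cameron--Martin (``one-particle'') space of $\mu$: for a Gaussian measure, any orthogonal decomposition of that space produces, through the independence of the corresponding components of the field, a convolution factorisation of the measure. Recall that $\mu$ is the centred Gaussian whose Cameron--Martin space is the Hilbert space $\mathcal{H}$ obtained by completing $C^\infty_0(\mathbb{R}^D)$ in the energy inner product $\langle u,v\rangle_{\mathcal{H}}=\int(\nabla u\cdot\nabla v+uv)\,dx=\langle u,(-\nabla^2+1)v\rangle_{L^2}$, with $G=(-\nabla^2+1)^{-1}$ the associated Green operator and $C(f,g)=\langle Gf,Gg\rangle_{\mathcal{H}}$. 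Given orthogonal projections $P_1,\dots,P_m,P_R$ onto mutually orthogonal closed subspaces $\mathcal{H}_{O_1},\dots,\mathcal{H}_{O_m},\mathcal{H}_R$ that span $\mathcal{H}$, the field splits as $\phi=\phi_{O_1}+\dots+\phi_{O_m}+\phi_R$ into independent centred Gaussians, $\phi_{O_i}$ having Cameron--Martin space $\mathcal{H}_{O_i}$ and covariance $C_{O_i}(f,g)=\langle P_iGf,Gg\rangle_{\mathcal{H}}$; since $\sum_iP_i+P_R=\mathrm{id}$ one gets $\sum_iC_{O_i}+C_R=C$, i.e.\ $\mu=\mu_{O_1}\ast\dots\ast\mu_{O_m}\ast\mu_R$.

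The concrete choice is $\mathcal{H}_{O_i}=\overline{C^\infty_0(O_i)}$, with the closure taken in $\mathcal{H}$. Locality of the energy form together with pairwise disjointness of the $O_i$ makes $\mathcal{H}_{O_1},\dots,\mathcal{H}_{O_m}$ mutually orthogonal in $\mathcal{H}$; let $\mathcal{H}_R$ be the orthogonal complement of their span, so that $h\in\mathcal{H}_R$ exactly when $(-\nabla^2+1)h=0$ on $O_1\cup\dots\cup O_m$. Then $C_{O_i}$ is the Dirichlet Green form of $-\nabla^2+1$ on $O_i$: $C_{O_i}(f,g)=\langle G_{O_i}f,g\rangle_{L^2}$, where $G_{O_i}f\in\overline{C^\infty_0(O_i)}$ solves $(-\nabla^2+1)G_{O_i}f=f$ in $O_i$. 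Since $G_{O_i}f$ is supported in $\overline{O_i}$ for every $f$, the Gaussian variable $\phi_{O_i}(f)$ vanishes almost surely whenever $\mathrm{supp}\,f\cap\overline{O_i}=\emptyset$; hence $\mu_{O_i}$ is carried by the subspace $\Phi'_{O_i}:=\{\phi\in\Phi':\mathrm{supp}\,\phi\subseteq\overline{O_i}\}$. This disposes of every factor in (\ref{eq:conv}) except $\mu_R$, and it does not disturb the rotational invariance noted before the lemma or Lemma~1, both of which concern $\mu$ alone.

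The step I expect to be the main obstacle is the matching support statement for the remainder: that $\mu_R$ is carried by $\Phi'_{(O_1\cup\dots\cup O_m)^c}:=\{\phi:\mathrm{supp}\,\phi\subseteq(O_1\cup\dots\cup O_m)^c\}$, equivalently that $\phi_R(f)=0$ almost surely for every $f\in C^\infty_0(O_1\cup\dots\cup O_m)$. Writing $f=\sum_if_i$ with $f_i\in C^\infty_0(O_i)$, one checks $P_i(Gf)=G_{O_i}f_i$ and $P_j(Gf)=0$ for $j\neq i$, so $\phi_R(f)$ is Gaussian with variance $\|Gf-\sum_iG_{O_i}f_i\|_{\mathcal{H}}^2$, the ``harmonic defect'' of $Gf$ relative to the cells. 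Showing that this quantity vanishes -- or otherwise reconciling it with the asserted support of $\mu_R$ -- is the real content of the lemma, and is precisely where the Markov property of the free Euclidean field attached to the specific covariance (\ref{eq:cov}), elliptic regularity for the Dirichlet problems on the $O_i$, and a careful treatment of the behaviour across the interfaces $\partial O_i$ all have to be brought to bear; this is where I expect the genuine work to lie, and where the methods of \cite{construction} and the present regularization are really needed. Once it is in place one sets $\mu_{(O_1\cup\dots\cup O_m)^c}:=\mu_R$ and (\ref{eq:conv}) is proved.
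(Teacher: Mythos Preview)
Your approach is exactly the paper's: orthogonally split the Cameron--Martin space as $H^1(\mathbb{R}^D)=H^1_0(O_1)\oplus\cdots\oplus H^1_0(O_m)\oplus V$, invoke the standard fact that such a splitting induces a convolution factorisation of the Gaussian measure, and then read off the supports from the density of each Cameron--Martin piece in the topological support of the corresponding factor. For the factors $\mu_{O_i}$ your argument and the paper's agree essentially word for word.

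The step you flag as the ``main obstacle'' is the one the paper dispatches in a single clause: it asserts that ``the elements of $V$ are supported in $(O_1\cup\cdots\cup O_m)^c$'' and then invokes Cameron--Martin density. Your caution here is well placed, because that assertion is not correct. As you yourself observe, $h\in V$ means exactly that $(-\nabla^2+1)h=0$ on each $O_i$; this forces $(-\nabla^2+1)h$, \emph{not} $h$, to be supported in the complement. A nonzero $h\in H^1$ solving $(-\nabla^2+1)h=0$ on $O_1$ is easy to exhibit (take any local solution and extend it smoothly to a compactly supported function), so $V$ is not contained in the distributions supported on the complement. Equivalently, for $f\in C_0^\infty(O_1)$ the ``harmonic defect'' $Gf-G_{O_1}f$ is the $(-\nabla^2+1)$--harmonic extension of the boundary values of $Gf$ and does not vanish; hence $\mathrm{Var}\,\phi_R(f)>0$ and $\mu_R$ cannot be carried by $\Phi'_{(O_1\cup\cdots\cup O_m)^c}$. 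So the quantity you hope will vanish does not, and no appeal to the Markov property or to elliptic regularity will change that. The issue lies in the statement of the lemma rather than in your outline: what the Cameron--Martin decomposition actually delivers is that $\mu_R$ is supported on distributions $\phi$ with $(-\nabla^2+1)\phi$ supported in $\overline{(O_1\cup\cdots\cup O_m)^c}$, which is strictly weaker than what is claimed.
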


\begin{proof}[Proof of Lemma 2]
This is nothing but a manifestation of the Markovian property of the Gaussian field \cite{nelson, dimock}. Note that the Cameron-Martin space of the measure $\mu$ is the Sobolev space $H^1(\mathbb{R}^D)$. It will be more convenient for us to use the fact that this space is unitarily equivalent to $H^{-1}(\mathbb{R}^D)$ via the map $(- \nabla^2 + 1)$, since the inner product in $H^{-1}$ coincides with the covariance of the measure. Now, it is straightforward to show (see Lemma 1 in \cite{dimock}) that we have the orthogonal decomposition 

\besn
H^{-1}(\mathbb{R}^D) & = & (\nabla^2 + 1) H^1(O_1) \oplus (- \nabla^2 + 1) H^1(O_2) \oplus \dots \\
& & \dots \oplus (- \nabla^2 + 1) H^1(O_m)  \oplus H^{-1}( ( O_1 \cup \dots \cup O_m)^c).
\eesn

It is a basic result\footnote{See e.g. Corollary 3.3.3. in \cite{bogachev}. } that an orthogonal decomposition of the Cameron-Martin spaces as above induces measures $\mu_{O_1}, \dots, \mu_{O_m}$ and $\mu_{(O_1\cup \dots \cup O_m)^c}$ which satisfy (\ref{eq:conv}). \newline

It only remains to show that the supports of these measures are as indicated above. Therefore, introduce the following inner product

\ben
\langle f , g \rangle = \int_{\mathbb{R}^D} ( 1 + x^2 )^{2 \alpha} \Big ( ( - \nabla^2 + 1) f  \Big )   \Big (  (-\nabla^2 + 1) g   \Big ),
\een

and denote the corresponding norm by $||| \cdot |||$. For any set $A$, let $H(A)$ stand for the completion of $C^\infty_0(A)$ in the norm $||| \cdot |||$.  It is straightforward to see\footnote{This is essentially the same computation as in \cite{support}.} that for sufficiently large $\alpha$ and $\beta$, the natural inclusion $H(A) \to H^{-1}(A)$ is Hilbert-Schmidt.  This in turn implies \cite{support1} that the support of the measure whose covariance is $H^{-1}(A)$ is contained in the dual of $H(A)$. Since this dual is naturally identified with a collection of distributions supported on $A$, we get what we want.\end{proof}

Let us now deal with reflection positivity. Let $\delta  > 0$. Define

\besn
\Pi^+_\delta  =  \{ (x_1, \dots, x_D) \in \mathbb{R}^D \, : \, x_D > \delta \}  & , &   \Pi^-_\delta  =  \{ (x_1, \dots, x_D) \in \mathbb{R}^D \, : \, x_D < - \delta \} \\
B^+_{\delta}(0,r)   = B(0,r) \cap \Pi^+_\delta \,&  , & \, B^-_{\delta}(0,r)   =  B(0,r) \cap \Pi^-_\delta \\
\txt{and} \quad B^0_{\delta}(0,r) & = & B(0,r) - \big ( B^+_{\delta}(0,r) \cup B^-_{\delta}(0,r) \big )
\eesn

We apply now Lemma 2 to the case when $O_1 = \Pi^+_\delta$ and $O_2 = \Pi^-_\delta$. To reduce clutter, we will denote $\Phi_{\Pi^+_\delta}, \Phi_{\Pi^-_\delta}$ and $\Phi_{(\Pi^+_\delta \cup \Pi^-_\delta)^c}$ by $\Phi^+, \Phi^-$ and $\Phi^0$ respectively. \newline

Now, suppose that $g_1, \dots, g_k$ are in $C^\infty_0(\Pi^+_0)$ and that $F$ is a cylindrical function of the form $f( g_1(\phi), \dots, g_k(\phi)  )$. Let $\Theta$ be the reflection in the $D$-th coordinate (i.e. $\Theta(x_1, \dots, x_D) = (x_1, \dots, -x_D)$). Set $\delta = \frac{1}{\Lambda}$. Note that for all sufficiently large $\Lambda$'s, we have that $g_1, \dots, g_k \in C_0^\infty(\Pi^+_{2\delta})$. \newline

Putting everything together, we have

\bes
& & \int F_\Theta [\phi] F[\phi] e^{- \int_{ B^+_{2 \delta}(0,r)  \cup B^-_{2 \delta}(0,r)  }    \mathcal{L}(\phi_\Lambda, \dots)         } d \mu[\phi] \nonumber \\
& = & \int F_\Theta[\phi^-] F[\phi^+] e^{- \int_{B^+_{2 \delta}(0,r)   }   \mathcal{L}(\phi^+_\Lambda, \dots)  }   e^{- \int_{B^-_{2 \delta}(0,r)   }   \mathcal{L}(\phi^-_\Lambda, \dots)  } d\mu^+[\phi^+] d\mu^-[\phi^-] \nonumber \\
& = & \bigg (  \int F[\phi] e^{- \int_{ B^+_{2 \delta} (0,r) } \mathcal{L}(\phi_\Lambda)  } d\mu[\phi]    \bigg ) \bigg (    \int F_\Theta[\phi] e^{- \int_{ B^-_{2 \delta} (0,r) } \mathcal{L}(\phi_\Lambda)  } d\mu[\phi]  \bigg ) \\
\label{eq:positive}
& = & \bigg (  \int F[\phi] e^{- \int_{ B^+_{2 \delta} (0,r) } \mathcal{L}(\phi_\Lambda)  } d\mu[\phi]    \bigg )^2 \geq 0
\ees

Now, as mentioned in the proof of the proposition above, we can assume that $\mathcal{L} \geq 0$. Letting $M = \sup_{\mathbb{R}^l}(\mathcal{L})$, we have that

\bes
\nonumber
\bigg |  e^{-  \int_{ B^+_{2 \delta}(0,r)  \cup B^-_{2 \delta}(0,r)  }    \mathcal{L}(\phi_\Lambda, \dots)     } - e^{-   \int_{ B(0,r) }    \mathcal{L}(\phi_\Lambda, \dots)  }  \bigg | & = & \\
\nonumber
e^{-   \int_{ B(0,r) }    \mathcal{L}(\phi_\Lambda, \dots)  }  \bigg | e^{+  \int_{ B^0_{2\delta}(0,r) }    \mathcal{L}(\phi_\Lambda, \dots)     }    -1       \bigg | & \lesssim & \\
\nonumber
e^{-   \int_{ B(0,r) }    \mathcal{L}(\phi_\Lambda, \dots)  } \int_{ B^0_{2\delta}(0,r) }    \mathcal{L}(\phi_\Lambda, \dots)   & \lesssim & \\
\label{eqns:ineq}
e^{-   \int_{ B(0,r) }    \mathcal{L}(\phi_\Lambda, \dots)  }  ( M r^{D-1} \delta )  = e^{-   \int_{ B(0,r) }    \mathcal{L}(\phi_\Lambda, \dots)  } \Big  ( \frac{M r^{D-1}}{\Lambda}  \Big ) .
\ees

Therefore, we see that

\besn
\Bigg |  \frac{ \int F[\phi] F_\Theta[\phi] \bigg (    e^{-  \int_{ B^+_{2 \delta}(0,r)  \cup B^-_{2 \delta}(0,r)  }    \mathcal{L}(\phi_\Lambda, \dots)     } - e^{-   \int_{ B(0,r) }    \mathcal{L}(\phi_\Lambda, \dots)  }        \bigg )      d \mu         }{  \int   e^{-   \int_{ B(0,r) }    \mathcal{L}(\phi_\Lambda, \dots)  }         d\mu      }    \Bigg | & \leq & \\
|| f||^2_{L^\infty} \Big ( \frac{M r^{D-1}}{\Lambda}  \Big ).
\eesn 

Thus, if the sequences $\{r_n\}_{n=1}^\infty$ and $\{\Lambda_n \}_{n=1}^\infty$ are chosen to satisfy

\be
\label{eq:cond}
\frac{ M_n r_n^{D-1} }{ \Lambda_n   } \to 0,
\ee

we have, in view of (\ref{eq:positive}), that $I(F F_\Theta) \geq 0$. Thus, $\mu$ is reflection positive. \newline

It remains to show translation invariance. Let $t \in \mathfrak{E}$ be a translation. Let $F[\phi] = f( g_1(\phi), \dots, g_k(\phi))$ be a cylindrical function, and once again, set $\delta = \frac{1}{\Lambda}$. Note that since the supports of $g_1, \dots, g_k$ are compact, then for a sufficiently large $r$, all of them will be contained in $B(0,r) \cap t ( B(0,r)) $. If we apply Lemma 2 now, where there is a single $O_1 = B(0,r) \cap t(B(0,r))$, we get that\footnote{The reader should perhaps draw the relevant sets for the case $D = 2$.}

\besn
\int F[\phi] e^{ - \int_{ (B(0,r) - t(B(0,r+\delta)))  \cup ( B(0,r- \delta) \cap t (B(0,r- \delta))       )       } \mathcal{L}(\phi_\Lambda)          }  d\mu[\phi] & = & \\
\int F[\phi]  e^{ - \int_{ (B(0,r) - t(B(0,r+\delta)))        } \mathcal{L}(\phi_\Lambda)          }  e^{ - \int_{  ( B(0,r- \delta) \cap t (B(0,r- \delta))       )       } \mathcal{L}(\phi_\Lambda)          }    d\mu[\phi] & = & \\
\bigg ( \int_{\Phi'_{  B(0,r) \cap t(B(0,r))  }   }  F([\psi])  e^{ - \int_{  ( B(0,r- \delta) \cap t (B(0,r- \delta))       )       } \mathcal{L}(\psi_\Lambda)          }  d\mu_{  B(0,r) \cap t(B(0,r))    }[\psi] \bigg ) \times & \dots  & \\ 
\dots \times \bigg (  \int_{  \Phi'_{  (B(0,r) \cap t(B(0,r)) )^c }    }      e^{ - \int_{ (B(0,r) - t(B(0,r+\delta)))  }    \mathcal{L}(\chi_\Lambda)    }  d\mu_{ (B(0,r) \cap t(B(0,r)) )^c   }[\chi] \bigg ) & = & \\
\bigg ( \int_{\Phi'_{  B(0,r) \cap t(B(0,r))  }   }  F([\psi])  e^{ - \int_{  ( B(0,r- \delta) \cap t (B(0,r- \delta))       )       } \mathcal{L}(\psi_\Lambda)          }  d\mu_{  B(0,r) \cap t(B(0,r))    }[\psi] \bigg ) \times & \dots  & \\ 
\dots \times \bigg (  \int_{  \Phi'_{  (   B(0,r) \cap t(B(0,r)) )^c }    }      e^{ - \int_{ t( B(0,r)  )  - B(0, r + \delta)   }    \mathcal{L}(\chi_\Lambda)    }  d\mu_{ (B(0,r) \cap t(B(0,r)) )^c   }[\chi] \bigg ) &= & \\
\int F[\phi] e^{ - \int_{ (t( B(0,r)    ) - B(0, r+ \delta) )   \cup (  B(0,r- \delta) \cap t (B(0,r- \delta))      )       } \mathcal{L}(\phi_\Lambda)          }  d\mu[\phi] ,& &
\eesn

where the penultimate equality follows from Lemma 1, since there is\footnote{Explicitly, $T$ is a reflection in the hyperplane orthogonal to the translation vector defining $t$ and passing through its midpoint.} a Poincar\'{e} element $T$, such that $$T \Big ( B(0,r) - t(B(0,r+\delta))) \Big ) =  t( B(0,r)  )  - B(0, r + \delta)    . $$

Now, proceeding as in the reflection positivity proof, we obtain that

\besn
\bigg |   e^{ - \int_{ (B(0,r) - t(B(0,r+\delta)))  \cup ( B(0,r- \delta) \cap t (B(0,r- \delta))       )       } \mathcal{L}(\phi_\Lambda)          }  - e^{- \int_{B(0,r)} \mathcal{L} (\phi_\Lambda)}   \bigg | &\lesssim& \\
  e^{- \int_{B(0,r)} \mathcal{L} (\phi_\Lambda)}   \Big ( \frac{M r^{D-1}}{\Lambda} \Big ),
\eesn

and that

\besn
\bigg |   e^{ - \int_{ (t( B(0,r)    ) - B(0, r+ \delta) )   \cup (  B(0,r- \delta) \cap t (B(0,r- \delta))      )           } \mathcal{L}(\phi_\Lambda)          }  - e^{- \int_{t( B(0,r))} \mathcal{L} (\phi_\Lambda)}   \bigg | &\lesssim& \\
e^{- \int_{t( B(0,r))} \mathcal{L} (\phi_\Lambda)} \Big (  \frac{M r^{D-1}}{\Lambda} \Big ).
\eesn

Since we know from Lemma 1 that 

\ben
 \int F_t[\phi] e^{ - \int_{B(0,r)} \mathcal{L}(\phi_\Lambda)} d\mu = \int F[\phi]  e^{ - \int_{t(B(0,r))} \mathcal{L}(\phi_\Lambda)} d\mu,
\een

we get

\besn
\Bigg |  \frac{ \int F[\phi] \bigg ( e^{ - \int_{B(0,r)} \mathcal{L}(\phi_\Lambda)} -   e^{ - \int_{t(B(0,r))} \mathcal{L}(\phi_\Lambda)}   \bigg )       d\mu[\phi]  }{ \int e^{- \int_{B(0,r)} \mathcal{L}(\phi_{\Lambda})     }  d\mu[\phi]  }   \Bigg | \lesssim || F||_{L^\infty} \Big (  \frac{M r^{D-1}}{\Lambda} \Big ) .
\eesn

It thus follows that if we take sequences that satisfy (\ref{eq:cond}), we will get $I(F_t) = I(F)$, and the proof is complete.
\end{proof}

\section*{The Constrained Case}

We would now like to extend the work above to the case when the integration in the informal expression (\ref{eq:heuristic}) is not over the entire set of fields, but is restricted to a subset of it. More precisely, we will be interested in integrating over the subset of fields satisfying finitely many constraints of the form

\bes
\nonumber
\kappa_1(\phi(x), \nabla^2 \phi(x), \dots, (\nabla^2)^{m_1} \phi(x)) = 0, & &\\
\nonumber
\kappa_2(\phi(x), \nabla^2 \phi(x), \dots, (\nabla^2)^{m_2} \phi(x)) = 0, & & \\
\label{eqs:constraints}
\vdots \qquad \qquad \qquad  \, \,  \, & & \\
\nonumber
\txt{and} \quad \kappa_\rho(\phi(x), \nabla^2 \phi(x), \dots, (\nabla^2)^{m_\rho} \phi(x)) = 0 & &
\ees

where these constraints are understood to hold at every point of the Euclidean spacetime. As a typical example, the reader should keep in mind the examples $\phi^2(x) - 1 =0$ of the vectorial nonlinear sigma model, or $\phi^\dagger (x) \phi(x) - \mathbb{I} = 0$ of the principal chiral one. \newline

Of course, we immediately face the problem that constraints as above are, almost surely, ill-defined on the domain of integration, for it is a standard fact \cite{jaffe} that the subset of functions in $\Phi'$ has measure zero. There are two possible avenues one can choose to proceed on. The first one is to press on, trying to make sense of the constraints, as they are on $\Phi'$. Alternatively, we could try imposing the constraint in the regularized integral, i.e. inside the brackets on the left hand side of (\ref{eq:int}). Let us analyze the first option.\newline

How should the constraint equations be interpreted on the space $\Phi'$? We do know what the constraints mean on $C_0^\infty$ functions, which are a dense subset of the space of integration. We therefore can say that if $\tilde{K}$ is the set of $C_0^\infty$ functions which satisfy the constraint, then its closure $K = \overline{\tilde{K}}$ is the set of elements of $\Phi'$ which do so as well. This is actually a reasonable definition since in the case of a linear constraint - that is in the case when the constraint is of the form $\mathcal{D} \phi = 0 $, where $\mathcal{D}$ is a linear, differential operator with constant coefficients - it follows easily that $\phi \in K \iff \mathcal{D} \phi = 0$, where the derivatives are interpreted in the distributional sense.\newline

Given that we now have a subset of our measure space, albeit one of measure zero, and are trying to restrict our measure to it, we can proceed in a similar way to that discussed in \cite{al}. In other words, we let $K_\delta$ be the `thickening' of $K$ (to use the terminology of \cite{al}). Then, for a function $F$ we take the Banach limit of $$\frac{\int_{K_\delta} F[\phi] d\mu[\phi] }{\mu(K_\delta)}$$ as $\delta \to 0$. This procedure does indeed generate a measure supported on $K$; however, there are several shortcomings:

\begin{itemize}
\item[i-] It is rather difficult to work with this definition and perform computations.
\item[ii-] It is not clear how to demonstrate Euclidean invariance and reflection positivity.
\item[iii-] Since the space $\Phi'$ is not metrizable, there is no obvious choice of `thickenings' $K_\delta$ to pick. One can try to bypass this issue by using a smaller space as a support for $\mu$, in particular a Hilbert space (see the footnote after the expression for the covariance of the measure $\mu$). Unfortunately, the natural metric on this support is not Euclidean invariant which makes it even less clear how could one demonstrate Euclidean invariance.
\item[iv-] This is perhaps the most serious one. The constructed measure depends solely on the constraint \textit{subset} and not on the \textit{functional form} of the constraints. This is certainly not what we expect from the formal physics calculations since changing the functional form of the constraint amounts to changing the action via the Jacobian factors.
\end{itemize}

In view of the above, we shall not pursue this approach further and will instead concentrate on implementing the constraint in a regularized form in a manner mimicking closely what is done in the physics literature. Note that we need to impose the constraints (\ref{eqs:constraints}) at every point of spacetime. Therefore, we would like to insert into our functional integral an expression of the form

\ben
\prod_{i = 1}^\rho \bigg (  \prod_{x \in \mathbb{R}^D}  \delta \Big ( \kappa_i (\phi(x), \dots \Big  )     \bigg ) .
\een

We thus have to make sense of two things: first, of the delta function in the functional integral context, and second, of the product of such functions over the set of all points of spacetime. While one can make sense of a Gaussian analog of the space of distributions (and thus of the delta function), thus solving the first problem, this would be an overkill in this context. Besides, a more primitive approach turns out to solve the second problem as well. \newline

The most pedestrian approach to the delta function is to interpret any expression with it via a limit. The choice of the limiting sequence is not unique, but it turns out that the distributional equation

\be
\label{eq:delta}
\delta(x) = \frac{1}{\sqrt{\pi}} \lim_{a \to \infty} \sqrt{a}  e^{- a x^2}
\ee

is most convenient for our purposes. Note that 

\besn
\delta(x_1) \dots \delta(x_r) & = &  \frac{1}{(\sqrt{\pi})^\rho} \lim_{a_1, \dots, a_\rho \to \infty} \sqrt{a_1 \dots a_\rho} e^{- a_1 x_1^2 - \dots - a_r x_\rho^2}  \\
& = & \frac{1}{(\sqrt{\pi})^\rho} \lim_{a \to \infty} \sqrt{a^\rho} e^{- a (x_1^2 + \dots + x_\rho^2)},
\eesn

where again the equalities hold in the distributional sense. Therefore, focusing on the case of a single constraint for now, we can make sense of an expression of the form $\prod_{x} \delta( \kappa[\phi(x)])$ by equating it up to an overall constant, with $e^{- a \sum_x \kappa^2\phi[(x)]}$. The overall constant can easily be recovered by normalizing this to have integral one. If we now rescale $a$ and interpret the sum over all points of spacetime as an integral, we are led to consider $e^{- a \int \kappa^2\phi[(x)]}$.\newline

In view of the above, we see that we can implement a collection of constraints of the form (\ref{eqs:constraints}) by simply adding the term 

\ben
a \int \kappa_1^2[\phi(x)] + \dots + \kappa_\rho^2[\phi(x)]
\een

to the action and sending $a \to \infty$.\newline

Putting everything together, we can now define the constrained analogue of (\ref{eq:int}) via

\be
\label{eq:sigma}
L \bigg ( \frac{ \int F[\phi] e^{- a_n \int_{B(0,r_n)}  \kappa_1^2[\phi_{\Lambda_n}] + \dots + \kappa^2_\rho [\phi_{\Lambda_n}] }      d\mu[\phi] }{   \int  e^{- a_n \int_{B(0,r_n)}  \kappa_1^2[\phi_{\Lambda_n}] + \dots + \kappa^2_\rho [\phi_{\Lambda_n}] }  d\mu[\phi]    }   \bigg ) \equiv I(F) ,
\ee

where it is understood that the sequences $\{r_n\}_{n=1}^\infty, \{ \Lambda_n \}_{n=1}^\infty$ and $\{a_n \}_{n=1}^\infty$ all go to infinity.\newline

We now have the following immediate corollary to Theorem 1 above\newline

\begin{corollary*} For constraints of the form (\ref{eqs:constraints}) which are bounded, the expression (\ref{eq:sigma}) defines a Euclidean invariant and reflection positive measure on a compactification of the space of distributions.
\end{corollary*}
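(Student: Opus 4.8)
The plan is to recognize that (\ref{eq:sigma}) is nothing but a special instance of (\ref{eq:int}), so that Theorem 1 applies directly. Concretely, set $l = \max\{m_1, \dots, m_\rho\}$ and define, for each $n$,
$$\mathcal{L}_n\big(\phi_{\Lambda_n}(x), \dots, (\nabla^2)^l \phi_{\Lambda_n}(x)\big) = a_n \sum_{i=1}^\rho \kappa_i^2\big(\phi_{\Lambda_n}(x), \dots, (\nabla^2)^{m_i} \phi_{\Lambda_n}(x)\big),$$
where in the $i$-th summand one simply ignores the arguments $(\nabla^2)^{m_i+1}\phi_{\Lambda_n}, \dots, (\nabla^2)^l\phi_{\Lambda_n}$. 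By hypothesis each $\kappa_i$ is bounded, hence so is each $\kappa_i^2$; since $a_n$ is a fixed positive number for each $n$, the function $\mathcal{L}_n$ is bounded (and nonnegative, hence a fortiori semibounded) for every $n$. Thus $\{\mathcal{L}_n\}_{n=1}^\infty$ is an admissible sequence of Lagrangians in the sense of Theorem 1, and with this choice the right-hand side of (\ref{eq:sigma}) coincides verbatim with the right-hand side of (\ref{eq:int}).

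The first part of the corollary — that (\ref{eq:sigma}) defines a unique rotationally invariant probability measure $\nu$ on the compactification $\mathring{\Phi}'$ with $I(F) = \int \mathring{F}\, d\nu$ — is then immediate from the first two assertions of Theorem 1, which hold for \emph{any} sequence of bounded Lagrangians, in particular for the $\mathcal{L}_n$ just constructed. No property of the $a_n$ beyond their being fixed positive numbers enters at this stage.

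For the Euclidean invariance and reflection positivity one invokes the last assertion of Theorem 1, which requires only that the sequences $\{r_n\}$ and $\{\Lambda_n\}$ be chosen so that condition (\ref{eq:cond}) holds. For the present Lagrangians we have $M_n = \sup_{\mathbb{R}^l} \mathcal{L}_n = a_n \, C_\kappa$, where $C_\kappa = \sup_{\mathbb{R}^l}\big(\sum_{i=1}^\rho \kappa_i^2\big) < \infty$ by the boundedness hypothesis, so (\ref{eq:cond}) becomes
$$\frac{a_n \, r_n^{D-1}}{\Lambda_n} \to 0.$$
This is the only point that needs a moment's thought: the three sequences $\{r_n\}$, $\{\Lambda_n\}$, $\{a_n\}$ must all tend to infinity, yet the displayed ratio must tend to zero. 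There is no tension here, since all three are at our disposal: for instance one may prescribe $\{r_n\}$ and $\{a_n\}$ to be arbitrary sequences increasing to infinity and then set $\Lambda_n = n\, a_n\, r_n^{D-1}$, which also tends to infinity while forcing the ratio down to $1/n \to 0$. With such a choice the reflection positivity and translation invariance arguments in the proof of Theorem 1 apply unchanged, and, together with the rotational invariance already recorded, this yields full Euclidean invariance of $\nu$.

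I do not anticipate a genuine obstacle; the proof is essentially a bookkeeping reduction to Theorem 1. The single subtlety worth flagging explicitly is the one just discussed — the divergence of the constraint-strength parameter $a_n$ propagates into the quantity $M_n$ that governs (\ref{eq:cond}), so that $\Lambda_n$ must be sent to infinity fast enough to dominate $a_n r_n^{D-1}$ — which is always achievable precisely because the new regularization leaves all three sequences free to be chosen after the fact.
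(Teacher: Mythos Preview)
Your proposal is correct and is precisely the argument the paper has in mind: the corollary is stated there as ``immediate'' from Theorem 1, and your reduction --- setting $\mathcal{L}_n = a_n\sum_i \kappa_i^2$ and checking that boundedness of the $\kappa_i$ makes each $\mathcal{L}_n$ bounded --- is exactly the intended one-line justification. The subtlety you flag about $a_n$ feeding into $M_n$ and forcing $\Lambda_n$ to grow fast enough is also the very point the paper remarks on in the paragraph following the corollary.
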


Of course, there is no loss of generality in assuming that constraints are bounded, as one can simply replace a given constraint $\kappa$ with e.g. $\frac{\kappa}{1+ \kappa^2}$, or more generally, $\tau(\kappa)$ for any other convenient function $\tau$ such that $\tau(x) \sim x$ around $x=0$. In effect, this simply amounts to a different choice of the representation of the delta function in (\ref{eq:delta}).\newline

At this point, an objection may be raised to the effect that how do we know that (\ref{eq:sigma}) does give the rigorous analogue of the constrained theory? After all, the constraints themselves are never imposed, only a softened version of them. Moreover, we cannot do better, if we want to use the theorem above to recover Euclidean invariance and reflection positivity, i.e. we cannot impose the constraints sharply \textit{before} removing the ultraviolet cutoff. This is because making the constraints sharper means taking a bigger $a_n$ in (\ref{eq:sigma}). But this implies that the bound $M_n$ in (\ref{eq:cond}) on the Lagrangian will grow, which in turn forces us to increase $\Lambda_n$. \newline

Answering this objection is rather difficult since the only answer which will be fully convincing would amount to fully constructing the model. There is some circumstantial evidence in that imposing the constraint softly does produce a model with desired properties \cite{kopper}. What we are going to do below is to verify that we do recover the expected result in the computable case of local, linear constraints imposed on a Gaussian. Therefore, we're going to consider the case where the constraints are of the form 

\be
\label{eq:linear}
\kappa_1[\phi] = \mathcal{D}_1[\phi], \dots, \kappa_\rho[\phi] = \mathcal{D}_\rho[\phi],
\ee

where $\mathcal{D}_1, \dots, \mathcal{D}_\rho$ are linear, differential operators with constant coefficients. This is not an entirely physically irrelevant set up since it does for example cover the case of abelian Yang-Mills.\newline

Now, as discussed above, we can easily interpret constraining a Gaussian theory with linear constraints, since the constraint equations easily generalize to the distributional setting. We simply restrict our covariance (\ref{eq:cov}) to those test functions which satisfy the constraint equations. This covariance then defines a Gaussian measure on the dual of the restricted space. \newline 

More precisely, let $\mathcal{D}_1(p), \dots, \mathcal{D}_\rho(p)$ be the Fourier transforms of the differential operators describing the constraints. Note that each one of the $\mathcal{D}$'s is a $p$-valued linear transformation. Let $\Pi(p)$ denote the ($p$-dependent) orthogonal projection on the intersection of the kernels of these transformations. The expression

\be
\label{eq:covr}
C^\kappa(f,g) = \frac{1}{(2 \pi)^D} \int_{\mathbb{R^D}} \frac{ \overline{\Big ( \Pi(p) \widehat{f}(p) \Big ) }   \Big ( \Pi(p) \widehat{g}(p) \Big ) }{p^2 + 1} dp
\ee

defines the covariance of the constrained theory. We now have the following

\begin{theorem}
For any sequences $\{a_n\}_{n=1}^\infty$ and $\{\Lambda_n \}_{n=1}^\infty$ going to infinity, one can choose a sequence $\{r_n\}_{n=1}^\infty$ going to infinity such that the measure given by (\ref{eq:sigma}) for the constraints (\ref{eq:linear}) coincides with (\ref{eq:covr}).
\end{theorem}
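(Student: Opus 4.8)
The plan is to carry out the Gaussian integrals in (\ref{eq:sigma}) explicitly — which is possible precisely because the constraints (\ref{eq:linear}) are linear — and then identify the resulting sequence of covariances with $C^\kappa$. Writing $\mathcal{A}=(-\nabla^2+1)^{-1}$ for the operator attached to (\ref{eq:cov}), and $A_n$ for the non-negative operator on $H^1$ determined by $\langle h,A_n h\rangle=\int_{B(0,r_n)}\sum_i|\mathcal{D}_i h_{\Lambda_n}|^2$, the weight $e^{-a_n\int_{B(0,r_n)}\sum_i \kappa_i^2[\phi_{\Lambda_n}]}$ is the exponential of a non-negative quadratic form in $\phi$, so the tilted probability measure $\mu_n$ is again a centered Gaussian. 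A routine finite-dimensional computation, followed by the standard limiting argument, gives $\int e^{i g(\phi)}\,d\mu_n=e^{-\widetilde C_n(g,g)}$ with
\[
\widetilde C_n(f,g)=\big\langle f,\big((-\nabla^2+1)+4a_nA_n\big)^{-1}g\big\rangle ,
\]
the constant $4$ coming from the normalisation $\int e^{if}d\mu=e^{-C(f,f)}$. By polarisation it then suffices to show that for every test function $g$ one has $\widetilde C_n(g,g)\to C^\kappa(g,g)=\langle g,\mathcal{A}\,\Pi g\rangle$, where $\Pi$ is the Fourier multiplier by $\Pi(p)$; convergence of these quadratic forms yields convergence of all finite-dimensional Gaussian marginals, hence $I(F)=\int F\,d\nu^\kappa$ for every cylindrical $F$, and the uniqueness clause of Theorem~1 identifies $\nu$ with the extension to $\mathring\Phi'$ of the Gaussian of covariance (\ref{eq:covr}).

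To identify the limit I would argue variationally. Since $(-\nabla^2+1)+4a_nA_n$ is positive and self-adjoint with form domain $H^1$,
\[
\widetilde C_n(g,g)=\sup_{h\in H^1}\Big[\,2\langle g,h\rangle-\|h\|_{H^1}^2-4a_n\!\int_{B(0,r_n)}\!\textstyle\sum_i|\mathcal{D}_i h_{\Lambda_n}|^2\,\Big].
\]
For the lower bound one inserts the trial function $h_\ast=\mathcal{A}\,\Pi g$: because the range of $\Pi(p)$ lies in $\ker\mathcal{D}_i(p)$ one has $\mathcal{D}_i(p)\Pi(p)=0$ for all $p$, hence $\mathcal{D}_i h_\ast=0$ identically, so the penalty term vanishes for every $n$, and using that $\Pi$ commutes with $\mathcal{A}$ one gets $\widetilde C_n(g,g)\ge 2\langle g,h_\ast\rangle-\|h_\ast\|_{H^1}^2=C^\kappa(g,g)$. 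For the matching upper bound, let $h_n$ be the maximiser; completing the square shows $\|h_n\|_{H^1}\le 2\,C(g,g)^{1/2}$ uniformly in $n$ and $a_n\int_{B(0,r_n)}\sum_i|\mathcal{D}_i(h_n)_{\Lambda_n}|^2\le C(g,g)-C^\kappa(g,g)$, so the penalty integral is $O(1/a_n)\to 0$. Passing to a subsequence, $h_n\rightharpoonup h_\infty$ weakly in $H^1$; testing the localised penalty against an arbitrary $\psi\in C_0^\infty(B(0,\rho))$, and using that $r_n>\rho$ eventually, that $\sigma_{\Lambda_n}\ast \mathcal{D}_i^{\,*}\psi\to \mathcal{D}_i^{\,*}\psi$ strongly in $H^{-1}$, and the weak$\times$strong pairing, one finds $\langle\mathcal{D}_i h_\infty,\psi\rangle=0$, hence $\mathcal{D}_i h_\infty=0$, i.e. $\Pi h_\infty=h_\infty$. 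Weak lower semicontinuity of $\|\cdot\|_{H^1}^2$, continuity of $h\mapsto\langle g,h\rangle$, and non-negativity of the penalty then give $\limsup_n\widetilde C_n(g,g)\le 2\langle g,h_\infty\rangle-\|h_\infty\|_{H^1}^2\le\sup\{2\langle g,h\rangle-\|h\|_{H^1}^2:\Pi h=h\}=C^\kappa(g,g)$, and combined with the pointwise lower bound this forces the full sequence to converge. Note that this needs only $a_n,\Lambda_n,r_n\to\infty$, with no growth condition relating them, so the freedom to choose $\{r_n\}$ promised in the statement is more than enough.

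The main obstacle is exactly this upper bound: one has to show that the softened penalty genuinely drives the maximisers into the constrained subspace in the limit, and the delicate feature is that the penalty carries the two regularisations at once — mollification by $\sigma_{\Lambda_n}$ and spatial truncation to $B(0,r_n)$ — so the localisation and weak-compactness steps must be arranged so that both limits cooperate. A secondary technical point to dispatch is the measurability in $p$ of the projection $\Pi(p)$, needed for $\Pi$ to be a well-defined bounded Fourier multiplier on $H^1$ and on $L^2$; this follows from the polynomial dependence of the $\mathcal{D}_i(p)$ on $p$, the rank of $\bigcap_i\ker\mathcal{D}_i(p)$ being constant off a proper algebraic set. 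Everything else — the explicit Gaussian integral, the convergence of the finite-dimensional marginals from convergence of the covariances, and the appeal to the uniqueness part of Theorem~1 — is routine.
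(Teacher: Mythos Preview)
Your argument is correct and takes a genuinely different route from the paper's. The paper proceeds in two steps: for fixed $a_n,\Lambda_n$ it sends $r\to\infty$ using monotone convergence of the quadratic forms $Q_{a_n,\Lambda_n,r}$ (hence strong resolvent convergence of the associated self-adjoint operators), which produces the explicit Fourier-side covariance
\[
C_{a_n,\Lambda_n,\infty}(f,g)=\frac{1}{(2\pi)^D}\int \overline{\widehat f(p)}\Big((p^2+1)\mathbb{I}+a_n\widehat\sigma_{\Lambda_n}^{\,2}\textstyle\sum_i\mathcal{D}_i^\dagger\mathcal{D}_i\Big)^{-1}\widehat g(p)\,dp,
\]
and then sends $a_n,\Lambda_n\to\infty$ by diagonalising the matrix and invoking dominated convergence; a diagonal choice of $r_m$ (making $|C_{a_m,\Lambda_m,r_m}-C_{a_m,\Lambda_m,\infty}|<1/m$ on an exhausting family of finite-dimensional sets $V_m$) stitches the two limits together. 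Your approach replaces all of this by the variational identity $\langle g,B^{-1}g\rangle=\sup_h\{2\langle g,h\rangle-\langle h,Bh\rangle\}$ and a penalty/weak-compactness argument: the lower bound comes from the trial $h_\ast=\mathcal{A}\Pi g$, and the upper bound from showing that the maximisers are driven into the constrained subspace. What the paper's method buys is an explicit intermediate formula and a completely soft passage to the limit via standard operator-theory lemmas; what yours buys is that no diagonal choice of $r_n$ is needed at all---any three sequences $a_n,\Lambda_n,r_n\to\infty$ work---so you in fact prove a slightly stronger statement than the theorem asserts. The constant $4$ in your tilted covariance is not the one the paper obtains, but since it only rescales $a_n$ it is immaterial to the limit.
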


\begin{proof}
Begin by choosing two sequences $\{a_n\}_{n=1}^\infty$ and $\{\Lambda_n\}_{n=1}^\infty$ diverging to infinity. Let $\{e_k\}_{k=1}^\infty$ stand for an orthonormal basis of $L^2(\mathbb{R}^D)$ consisting of Schwarz functions (e.g. take products of normalized Hermite functions). Let $V_m$ stand for the intersection of $B(0,m)$ in $L^2$ with the span of $\{e_1, \dots, e_m\}$. Note that $V_1 \subset V_2 \subset \dots$ and that $V = \cup_{m=1}^\infty V_m$ is dense in $L^2$.\newline

We now rely heavily on the fact that all involved measures in this case will be Gaussian.\footnote{A good background reference for the facts used in the proof is 9.3 in \cite{jaffe}.} First, we use that 

\ben
\frac{   e^{- a_n \int_{B(0,r_n)}  \kappa_1^2[\phi_{\Lambda_n}] + \dots + \kappa_\rho^2[\phi_{\Lambda_n}]   } d\mu[\phi]}{ \int e^{- a_n \int_{B(0,r_n)}  \kappa_1^[\phi_{\Lambda_n}] + \dots + \kappa_\rho^2[\phi_{\Lambda_n}]   } d\mu[\phi] }
\een

defines a Gaussian measure. Let us denote its covariance by $C_{a_n, \Lambda_n, r_n}$. Now, take any monotone increasing sequence $\{r_m \}_{m=1}^\infty$ with $r_m \nearrow \infty$. Since, for a fixed $a_n$ and $\Lambda_n$, the quadratic forms

\ben
Q_{a_n, \Lambda_n, r_m}(\phi) = \int_{\mathbb{R}^D} \phi \big (-  \nabla^2 + 1) \phi + a_n \int_{B(0,r_m)} \kappa_1^2[\phi_{\Lambda_n}] + \dots + \kappa_\rho^2[\phi_{\Lambda_n}]
\een

are closed on the Sobolev space $H^1$ and satisfy $Q_{a_n, \Lambda_n, r_m} \leq Q_{a_n, \Lambda_n, r_{m+1}}$, we have (see e.g. Theorem S.14 in \cite{simon}) that the corresponding self-adjoint operators converge in the strong resolvent sense. It thus follows that

\ben
\lim_{r_n \to \infty} C_{a_n, \Lambda_n, r_n}(f,g) = C_{a_n, \Lambda_n, \infty}(f,g),
\een

where $C_{a_n, \Lambda_n, \infty}(f,g)$ is given by the formula

\be
\label{eq:inter1}
 \frac{1}{(2 \pi)^D} \int _{\mathbb{R}^D}    \overline{ \widehat{f}(p)   }   \Big (      (p^2 + 1) \mathbb{I} + a_n  \widehat{\sigma}^2_{\Lambda_n} \big ( \mathcal{D}_1^\dagger(p) \mathcal{D}_1(p) + \dots + \mathcal{D}_\rho^\dagger(p) \mathcal{D}_\rho(p)   \big )    \Big )^{-1}   \widehat{g}(p)   dp.
\ee

It thus follows that the Gaussian measures whose covariances are $C_{a_n, \Lambda_n, r_n}$ converge weakly, as $r_n \to \infty$, to the Gaussian measure whose covariance is given by $C_{a_n, \Lambda_n, \infty}$.\newline

Now, for a given $m$, choose $r_m$ such that 

\be
\label{eq:inter2}
\Big |C_{a_m, \Lambda_m, r_m}(f,g) - C_{a_m, \Lambda_m, \infty}(f,g) \Big  | < \frac{1}{m},
\ee

for all $f,g \in V_m$.\newline

Pick $f, g \in V$. Consider $C_{a_n, \Lambda_n, \infty}(f,g)$. Since $\mathcal{D}_1^\dagger(p) \mathcal{D}_1(p) + \dots + \mathcal{D}_\rho^\dagger(p) \mathcal{D}_\rho(p) $ is a self-adjoint matrix, it can be unitarily diagonalized, and thus, the inverse in (\ref{eq:inter1}) can be easily computed. Sending then $a_n, \Lambda_n \to \infty$ and applying dominated convergence, we see that 

\ben
C_{a_n, \Lambda_n, \infty}(f,g) \underset{a_n, \Lambda_n \to \infty}{\longrightarrow} C^\kappa(f,g)
\een

since the components orthogonal to the kernel of $\mathcal{D}_1^\dagger(p) \mathcal{D}_1(p) + \dots + \mathcal{D}_\rho^\dagger(p) \mathcal{D}_\rho(p) $ are suppressed by the diverging denominator. Combining this with (\ref{eq:inter2}), we have that for all $f, g \in V$ 

\ben
C_{a_m, \Lambda_m, r_m} (f,g) \underset{m \to \infty}{\longrightarrow} C(f,g). 
\een

Since $V$ is dense in $L^2$, and since $C_{a_m, \Lambda_m, r_m}(f,g), C_{a_m, \Lambda_m, \infty}(f,g)$ and $C^\kappa(f,g)$ are all bounded from above by $\frac{1}{(2 \pi)^D}\int \overline{\widehat{f}(p)}  \widehat{g}(p) dp$, it follows that $$C_{a_m, \Lambda_m, r_m} ( \cdot, \cdot) \to C^\kappa ( \cdot, \cdot)$$ weakly on $L^2$, and thus, weakly as bilinear forms on the Schwarz space.\newline 

It consequently follows that the corresponding Gaussian measures converge weakly. This, together with the fact that on convergent sequences, the Banach limit coincides with the usual one, implies that the measure given by (\ref{eq:sigma}) coincides with the Gaussian one whose covariance is given by (\ref{eq:covr}); and this is exactly what we want.
\end{proof}

 \textbf{Acknowledgments:} The author would like to thank J. Merhej for reading a preliminary version of this paper and for the numerous comments which have greatly improved its readability.

\texttt{{\footnotesize Department of Mathematics, American University of Beirut, Beirut, Lebanon.}
}\\ \texttt{\footnotesize{Email address}} : \textbf{\footnotesize{tamer.tlas@aub.edu.lb}}

\end{document}